\newcommand{\ignore}[1]{}
\newtheorem{theorem}{Theorem}
\newtheorem{lemma}[theorem]{Lemma}
\title{Elementary Proofs of Some Stirling Bounds}
\author{N. H. Bshouty}
\address{{\bf Nader H. Bshouty} Dept. of Computer Science. Technion,  Haifa, 32000}
\author{V. E. Bshouty-Hurani}
\address{{\bf Vivian E. Bshouty-Hurani}. The Arab Orthodox College.  Haifa.}
\author{G. Haddad }
\address{{\bf George Haddad}. The Arab Orthodox College. Grade 10. Haifa.}
\author{T. Hashem}
\address{{\bf Thomas Hashem}. Sister of St. Joseph High School. Grade 11. Nazareth.}
\author{F. Khoury}
\address{{\bf Fadi Khoury}. Sister of Nazareth High School. Grade 10. P.O.B. 9422, Haifa, 35661.}
\author{O. Sharafy}
\address{{\bf Omar Sharafy}. The Arab Orthodox College. Grade 10. Haifa.}
\begin{document}

\begin{abstract}
We give elementary proofs of several Stirling's precise bounds. We first improve all the precise bounds
from the literature and give new precise bounds.
In particular, we show that for all $n\ge 8$
$$\sqrt{2\pi n}\left(\frac{n}{e}\right)^n e^{\frac{1}{12n}-\frac{1}{360n^3+103n}}
\ge n!\ge \sqrt{2\pi n}\left(\frac{n}{e}\right)^n e^{\frac{1}{12n}-\frac{1}{360n^3+102n}}$$
and for all $n\ge 3$
$$\sqrt{2\pi n}\left(\frac{n}{e}\right)^n e^{\frac{1}{12n+\frac{2}{5n}-\frac{1.1}{10n^3}}}
\ge n!\ge \sqrt{2\pi n}\left(\frac{n}{e}\right)^n e^{\frac{1}{12n+\frac{2}{5n}-\frac{0.9}{10n^3}}}.$$
\end{abstract}
\maketitle

\section{Introduction}
There are many asymptotic approximations for factorials and few precise bounds. Robbins gave in
1955 the following precise bound,~\cite{R55},
$$\sqrt{2\pi n}\left(\frac{n}{e}\right)^ne^{\frac{1}{12n}}\ge  n!\ge \sqrt{2\pi n}\left(\frac{n}{e}\right)^ne^{\frac{1}{12n+1}}.$$
Based on Robbin's analysis Maria gave the following tighter bound,~\cite{M65},
$$\sqrt{2\pi n}\left(\frac{n}{e}\right)^ne^{\frac{1}{12n}}\ge n!\ge \sqrt{2\pi n}\left(\frac{n}{e}\right)^ne^{\frac{1}{12n+\frac{3}{2(2n+1)}}}.$$
We give an elementary analysis that achieves many other tighter bounds. In particular, we improve
the above two bounds to: for all $n\ge 3$
$$\sqrt{2\pi n}\left(\frac{n}{e}\right)^n e^{\frac{1}{12n+\frac{2}{5n}-\frac{1.1}{10n^3}}}
\ge n!\ge \sqrt{2\pi n}\left(\frac{n}{e}\right)^n e^{\frac{1}{12n+\frac{2}{5n}-\frac{0.9}{10n^3}}}.$$
In \cite{I03}, Impens gave other precise bounds that we also improve with our elementary analysis. See the bounds in the table of Section~\ref{Other}.

In the literature, there are other precise bounds that are proved using non-elementary mathematics. See for example~\cite{SS90,N15}.

\section{Preliminary Results}
In this section we give some preliminary results.
\begin{lemma}\label{L1} For any sequence $a_1,a_2,\ldots$ of positive read numbers, if for $n\ge n_0$
\begin{eqnarray}\label{GE}
\left(n+\frac{1}{2}\right) \ln \left(1+\frac{1}{n}\right)-1 \ge a_n-a_{n+1}
\end{eqnarray} (respectively, $\le$) then for any $k\ge n\ge n_0$
we have
$$\frac{n!e^n}{n^{n+(1/2)}}\cdot e^{-a_{n}}\ge \frac{k!e^k}{k^{k+(1/2)}}\cdot e^{-a_{k}}$$
(respectively, $\le$).
\end{lemma}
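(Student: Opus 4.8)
The plan is to recognize this as a pure monotonicity statement about the sequence
$$b_n := \frac{n!\,e^n}{n^{n+(1/2)}}\cdot e^{-a_n},$$
so that the claimed chain of inequalities for $k\ge n\ge n_0$ is just the assertion that $b_n$ is non-increasing (respectively non-decreasing) on $\{n_0,n_0+1,\ldots\}$. Since all quantities involved are positive, I would pass to logarithms and study the consecutive difference $\ln b_n-\ln b_{n+1}$, reducing everything to a single one-step comparison.

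First I would write $\ln b_n=\ln(n!)+n-(n+\tfrac12)\ln n-a_n$ and compute the telescoping difference. Using $\ln((n+1)!)=\ln(n!)+\ln(n+1)$, the factorial and the explicit $n$ terms cancel cleanly, and the logarithmic terms combine via $\ln(n+1)-\ln n=\ln\!\left(1+\tfrac1n\right)$. The key identity I expect to land on is
$$\ln b_n-\ln b_{n+1}=\left(n+\tfrac12\right)\ln\!\left(1+\tfrac1n\right)-1-(a_n-a_{n+1}).$$
This is exactly the shape of hypothesis~\eqref{GE}: the inequality $\left(n+\tfrac12\right)\ln\!\left(1+\tfrac1n\right)-1\ge a_n-a_{n+1}$ says precisely that $\ln b_n-\ln b_{n+1}\ge 0$, hence $b_n\ge b_{n+1}$ for every $n\ge n_0$ (and symmetrically for $\le$).

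Having established the one-step inequality, I would finish by telescoping: for $k\ge n\ge n_0$,
$$\ln b_n-\ln b_k=\sum_{j=n}^{k-1}\bigl(\ln b_j-\ln b_{j+1}\bigr)\ge 0,$$
since each summand is non-negative by the previous step (and the sum is empty, giving equality, when $k=n$). Exponentiating yields $b_n\ge b_k$, which is the desired conclusion; the $\le$ case is identical with all inequalities reversed.

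There is no serious obstacle here: the only nontrivial content is the algebraic simplification of $\ln b_n-\ln b_{n+1}$, and the cancellation is forced by the specific exponents in $b_n$, which is presumably why the sequence was defined this way. The remaining work—the telescoping and the monotonicity-to-chain step—is routine, so I would keep the write-up short and emphasize the identity above as the crux.
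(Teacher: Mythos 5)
Your proposal is correct and follows essentially the same route as the paper: both reduce the claim to the single-step inequality $b_n\ge b_{n+1}$ and observe that it is equivalent to hypothesis~(\ref{GE}), the only cosmetic difference being that you work with logarithmic differences while the paper works with the ratio $\left(1+\frac{1}{n}\right)^{n+\frac{1}{2}}\ge e^{1+a_n-a_{n+1}}$. Your algebraic identity for $\ln b_n-\ln b_{n+1}$ checks out, so there is nothing to fix.
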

\begin{proof} It is enough to prove that for every $n\ge n_0$
$$\frac{n!e^n}{n^{n+(1/2)}}\cdot e^{-a_{n}}\ge \frac{(n+1)!e^{n+1}}{(n+1)^{(n+1)+(1/2)}}\cdot e^{-a_{n+1}}.$$ This is equivalent to
$$\left(1+\frac{1}{n}\right)^{n+\frac{1}{2}}\ge e^{1+a_n-a_{n+1}}$$ which is equivalent to (\ref{GE}).\qed
\end{proof}

In the Appendix we give a sketch of the proof of Wallis' formula and to the fact that
$$\lim_{n\to\infty} \frac{n!e^n}{n^{n+\frac{1}{2}}}=\sqrt{2\pi}.$$
Now we prove
\begin{lemma}\label{main} For any sequence $a_1,a_2,\ldots$ of positive real numbers, where $\lim_{n\to\infty}a_n=0$, if for $n\ge n_0$
$$\left(n+\frac{1}{2}\right) \ln \left(1+\frac{1}{n}\right)-1 \ge a_n-a_{n+1}$$
(respectively, $\le$) then for $n\ge n_0$
$$n!\ge \sqrt{2\pi n}\left(\frac{n}{e}\right)^n e^{a_n}$$
(respectively, $\le$).
\end{lemma}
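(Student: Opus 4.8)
The plan is to apply Lemma~\ref{L1} and then pass to the limit $k\to\infty$, feeding in the value of the limit supplied just before the statement (the Wallis consequence). First I would introduce the shorthand
$$b_n := \frac{n!\,e^n}{n^{n+(1/2)}}\cdot e^{-a_n}.$$
Under the hypothesis taken with $\ge$, Lemma~\ref{L1} says exactly that $b_n\ge b_k$ whenever $k\ge n\ge n_0$; that is, the sequence $(b_n)_{n\ge n_0}$ is non-increasing. In the respectively-$\le$ case the same lemma makes it non-decreasing.

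Next I would compute $\lim_{k\to\infty} b_k$. Since $\lim_{k\to\infty} a_k=0$ we have $e^{-a_k}\to 1$, while the fact stated before the lemma gives $\lim_{k\to\infty}\frac{k!\,e^k}{k^{k+(1/2)}}=\sqrt{2\pi}$. Multiplying, $\lim_{k\to\infty} b_k=\sqrt{2\pi}$.

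Now I would fix $n\ge n_0$. From $b_n\ge b_k$ for every $k\ge n$, letting $k\to\infty$ yields $b_n\ge\sqrt{2\pi}$. Unwinding the definition,
$$\frac{n!\,e^n}{n^{n+(1/2)}}\cdot e^{-a_n}\ge \sqrt{2\pi}\quad\Longleftrightarrow\quad n!\ge \sqrt{2\pi}\,\frac{n^{n+(1/2)}}{e^n}\,e^{a_n}=\sqrt{2\pi n}\left(\frac{n}{e}\right)^n e^{a_n},$$
which is the claimed bound. The $\le$ case is identical: monotonicity reverses, so $b_n\le\sqrt{2\pi}$ and every inequality flips.

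The only step that is not pure bookkeeping is the passage to the limit: one must observe that a non-increasing sequence satisfies $b_n\ge\lim_{k\to\infty} b_k$ at each finite index, which is what converts the telescoping comparison of Lemma~\ref{L1} into a bound valid at the individual $n$. This is elementary (the limit of a sequence whose terms are all $\le b_n$ cannot exceed $b_n$), so I expect no real obstacle; the remaining work is substituting the Wallis limit and rearranging the exponent $n^{n+(1/2)}/e^n=\sqrt{n}\,(n/e)^n$.
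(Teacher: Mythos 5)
Your proof is correct and follows exactly the paper's argument: apply Lemma~\ref{L1}, let $k\to\infty$ using $a_k\to 0$ together with the limit $\lim_{k\to\infty} k!e^k/k^{k+1/2}=\sqrt{2\pi}$, and rearrange. You have merely spelled out the monotone-sequence limit step that the paper leaves implicit.
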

\begin{proof} By Lemma~\ref{L1}, for $n\ge n_0$,
$$\frac{n!e^n}{n^{n+1/2}}\cdot e^{-a_{n}}\ge \lim_{k\to\infty}\frac{k!e^k}{k^{k+1/2}}\cdot e^{-a_{k}}=\sqrt{2\pi}.$$ Therefore
$$n!\ge \sqrt{2\pi n}\left(\frac{n}{e}\right)^n e^{a_n}.\qed$$
\end{proof}
Notice that
\begin{eqnarray}\label{Taylor}\left(n+\frac{1}{2}\right) \ln \left(1+\frac{1}{n}\right)-1=
\sum_{k=2}^\infty \frac{(-1)^k(k-1)}{2k(k+1)}\frac{1}{n^k}.
\end{eqnarray}

Since ${(-1)^k(k-1)}/({2k(k+1)}{n^k})$ is a monotonically decreasing sequence we have
$$\sum_{k=2}^{2r-1} \frac{(-1)^k(k-1)}{2k(k+1)}\frac{1}{n^k}\le \sum_{k=2}^\infty \frac{(-1)^k(k-1)}{2k(k+1)}\frac{1}{n^k}\le \sum_{k=2}^{2r} \frac{(-1)^k(k-1)}{2k(k+1)}\frac{1}{n^k}.$$
This with Lemma~\ref{main} and (\ref{Taylor}) implies
\begin{lemma}\label{FIN} For any sequence $a_1,a_2,\ldots$ of positive real numbers where $\lim_{n\to\infty}a_n=0$ and $r>1$, if for $n\ge n_0$
\begin{eqnarray}\label{greater2}
\sum_{k=2}^{2r-1} \frac{(-1)^k(k-1)}{2k(k+1)}\frac{1}{n^k} \ge a_n-a_{n+1}
\end{eqnarray}
then for $n\ge n_0$
\begin{eqnarray}\label{greater}
n!\ge \sqrt{2\pi n}\left(\frac{n}{e}\right)^n e^{a_n}.
\end{eqnarray}
If for $n\ge n_0$
\begin{eqnarray}\label{less2}
\sum_{k=2}^{2r} \frac{(-1)^k(k-1)}{2k(k+1)}\frac{1}{n^k} \le a_n-a_{n+1}
\end{eqnarray}
then for $n\ge n_0$
\begin{eqnarray}\label{less}
n!\le \sqrt{2\pi n}\left(\frac{n}{e}\right)^n e^{a_n}.
\end{eqnarray}
\end{lemma}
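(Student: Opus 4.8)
The plan is to obtain Lemma~\ref{FIN} directly as a corollary of Lemma~\ref{main}, the Taylor identity (\ref{Taylor}), and the alternating-series bracketing displayed immediately above the statement. The governing quantity in the hypothesis of Lemma~\ref{main} is $\left(n+\frac12\right)\ln\left(1+\frac1n\right)-1$, and by (\ref{Taylor}) this equals the full series $\sum_{k=2}^{\infty}\frac{(-1)^k(k-1)}{2k(k+1)}\frac{1}{n^k}$. Since the summand magnitudes $(k-1)/(2k(k+1)n^k)$ decrease (as already noted), the series is sandwiched between its odd-ending partial sum $\sum_{k=2}^{2r-1}$ and its even-ending partial sum $\sum_{k=2}^{2r}$. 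Thus the truncated hypotheses (\ref{greater2}) and (\ref{less2}) transfer to the untruncated hypothesis that Lemma~\ref{main} requires, and the conclusions follow. Note also that the standing assumptions on $(a_n)$ in Lemma~\ref{FIN} (positivity and $\lim_{n\to\infty}a_n=0$) are exactly those of Lemma~\ref{main}, so the latter applies verbatim.

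For the lower bound, I would first use the bracketing inequality $\sum_{k=2}^{2r-1}\cdots \le \sum_{k=2}^{\infty}\cdots$ to argue that if (\ref{greater2}) holds for $n\ge n_0$, then a fortiori $\sum_{k=2}^{\infty}\frac{(-1)^k(k-1)}{2k(k+1)}\frac{1}{n^k}\ge a_n-a_{n+1}$. By (\ref{Taylor}) this reads $\left(n+\frac12\right)\ln\left(1+\frac1n\right)-1\ge a_n-a_{n+1}$, which is precisely the ``$\ge$'' hypothesis of Lemma~\ref{main}; invoking the ``$\ge$'' conclusion of that lemma yields (\ref{greater}).

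The upper bound is symmetric. The bracketing inequality $\sum_{k=2}^{\infty}\cdots\le\sum_{k=2}^{2r}\cdots$ turns hypothesis (\ref{less2}) into $\left(n+\frac12\right)\ln\left(1+\frac1n\right)-1\le a_n-a_{n+1}$ via (\ref{Taylor}), and the ``$\le$'' branch of Lemma~\ref{main} then delivers (\ref{less}).

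I do not expect a genuine obstacle, since all the analytic content---the limit $\lim_{k\to\infty}k!e^k/k^{k+1/2}=\sqrt{2\pi}$ underpinning Lemma~\ref{main}, the expansion (\ref{Taylor}), and the monotone magnitudes justifying the alternating-series bracketing---is established before the statement. The one point requiring care is the bookkeeping of the truncation parities: because the series begins with a \emph{positive} term at $k=2$, the odd endpoint $2r-1$ underestimates and the even endpoint $2r$ overestimates the true sum, which is exactly what couples (\ref{greater2}) with the lower bound and (\ref{less2}) with the upper bound. Matching these directions correctly is the only place a sign slip could enter.
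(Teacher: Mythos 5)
Your proof is correct and is exactly the argument the paper intends: it derives Lemma~\ref{FIN} by combining the alternating-series bracketing of the partial sums with the identity (\ref{Taylor}) and then invoking the two branches of Lemma~\ref{main}. The parity bookkeeping (odd-ending partial sum underestimates, even-ending overestimates, since the $k=2$ term is positive) is handled correctly and matches the inequality chain displayed just before the lemma in the paper.
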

Notice that
\begin{eqnarray*}
\sum_{k=2}^{\infty} \frac{(-1)^k(k-1)}{2k(k+1)}\frac{1}{n^k} &=&\frac{1}{12}\frac{1}{n^2}-\frac{1}{12}\frac{1}{n^3}+
\frac{3}{40}\frac{1}{n^4}-\frac{1}{15}\frac{1}{n^5}+
\frac{5}{84}\frac{1}{n^6}-\frac{3}{56}\frac{1}{n^7} \\
&&+\frac{7}{144}\frac{1}{n^8}-\frac{2}{45}\frac{1}{n^9}+\frac{9}{220}\frac{1}{n^{10}}-
\frac{5}{132}\frac{1}{n^{11}}+\frac{11}{312}\frac{1}{n^{12}}\cdots
\end{eqnarray*}

\section{Bounds for $n!$}
In this section we prove some bounds for $n!$ using Lemma~\ref{FIN}.

We first improve Robbins and Maria's bound~\cite{M65,R55}. Take $r=4$ and $a_n=1/(12n+0.4/n+0.09/n^3)$. Then
$$\frac{1}{12}\frac{1}{n^2}-\frac{1}{12}\frac{1}{n^3}+
\frac{3}{40}\frac{1}{n^4}-\frac{1}{15}\frac{1}{n^5}+\frac{5}{84}\frac{1}{n^6}-\frac{3}{56}\frac{1}{n^7}\ge\ \ \ \ \ \ \ \  $$ $$\ \ \ \ \ \ \ \  \left(\frac{1}{12n+\frac{2}{5n}+\frac{.9}{10n^3}}\right)-\left(\frac{1}{12(n+1)+\frac{2}{5(n+1)}+\frac{.9}{10(n+1)^3}}\right)$$
is equivalent to \begin{eqnarray*}
460000n^9+460000n^8-62970400n^7-181440000n^6-191576090n^5\\
\ \ \ \ \ \ \ \ -72519910n^4-5874457n^3-859176n^2+1422450n+498555\ge 0\end{eqnarray*} which is true for $n\ge 13$.
By Lemma~\ref{FIN} and by verifying that (\ref{greater12}) is also true for $n=3,4,\ldots,12$ we get that for every $n\ge 3$
\begin{eqnarray}\label{greater12}
n!\ge \sqrt{2\pi n}\left(\frac{n}{e}\right)^n e^{\frac{1}{12n+\frac{2}{5n}+\frac{.9}{10n^3}}}.
\end{eqnarray}
On the other hand for $r=4$ and $a_n=1/(12n+0.4/n+0.11/n^3)$
$$\frac{1}{12}\frac{1}{n^2}-\frac{1}{12}\frac{1}{n^3}+
\frac{3}{40}\frac{1}{n^4}-\frac{1}{15}\frac{1}{n^5}+\frac{5}{84}\frac{1}{n^6}-\frac{3}{56}\frac{1}{n^7}+\frac{7}{144}\frac{1}{n^8}\le\ \ \ \ \ \ \ \  $$ $$\ \ \ \ \ \ \ \  \left(\frac{1}{12n+\frac{2}{5n}+\frac{1.1}{10n^3}}\right)-\left(\frac{1}{12(n+1)+\frac{2}{5(n+1)}+\frac{1.1}{10(n+1)^3}}\right)$$
is equivalent to
\begin{eqnarray*}
-2280000n^{10}-2280000n^9-29928000n^8+322560000n^7+990219780n^6\\
+1047284220n^5+394378298n^4+31555984n^3+2970300n^2-9501470n\\
-3312155\le 0
\end{eqnarray*}
which is true for $n\ge 6$. By Lemma~\ref{FIN} and by verifying that (\ref{less34}) is also true for $n=1,2,\ldots,5$ we get that for every $n\ge 1$
\begin{eqnarray}\label{less34}
n!\le \sqrt{2\pi n}\left(\frac{n}{e}\right)^n e^{\frac{1}{12n+\frac{2}{5n}+\frac{1.1}{10n^3}}}.
\end{eqnarray}


Now for $r=4$ and $a_n=1/(12n)-1/(360n^3+103n)$,
 $$\frac{1}{12}\frac{1}{n^2}-\frac{1}{12}\frac{1}{n^3}+
\frac{3}{40}\frac{1}{n^4}-\frac{1}{15}\frac{1}{n^5}+
\frac{5}{84}\frac{1}{n^6}-\frac{3}{56}\frac{1}{n^7}
+\frac{7}{144}\frac{1}{n^8}\le$$
$$\ \ \ \ \ \ \ \  \left(\frac{1}{12n}-\frac{1}{360n^3+103n}\right)-\left(\frac{1}{12(n+1)}-\frac{1}{360(n+1)^3+103(n+1)}\right)$$ is equivalent to
\begin{eqnarray*}
-3600 n^7-1687578 n^5+30717978 n^4+58917996 n^3+\\
\ \ \ \ \ \ \ \ \ \ \ \ \ \ \ \ \ \ 49497870 n^2+16976975 n+11683805\le 0
\end{eqnarray*}
which is true for $n\ge 14$. By Lemma~\ref{FIN} and by verifying that (\ref{less}) is also true for $n=1,2,\ldots,14$ we get that for every $n\ge 1$
$$n!\le \sqrt{2\pi n}\left(\frac{n}{e}\right)^n e^{\frac{1}{12n}-\frac{1}{360n^3+103n}}.$$

On the other hand, take $r=5$ and $a_n=1/(12n)-1/(360n^3+102n)$. Then
$$\frac{1}{12}\frac{1}{n^2}-\frac{1}{12}\frac{1}{n^3}+
\frac{3}{40}\frac{1}{n^4}-\frac{1}{15}\frac{1}{n^5}+\frac{5}{84}\frac{1}{n^6}-\frac{3}{56}\frac{1}{n^7}
+\frac{7}{144}\frac{1}{n^8}-\frac{2}{45}\frac{1}{n^9}\ge\ \ \ \ \ \ \ \  $$ $$\ \ \ \ \ \ \ \  \left(\frac{1}{12n}-\frac{1}{360n^3+102n}\right)-\left(\frac{1}{12(n+1)}-\frac{1}{360(n+1)^3+102(n+1)}\right)$$ is equivalent to
\begin{eqnarray*}600n^8-46338n^6+46338n^5-782124n^4-1506090n^3-\\
\ \ \ \ \ \ \ \ \ \ \ \ \ \ \ \ \ \ \ 1253245n^2-429471n-293216\ge 0
\end{eqnarray*}
which is true for all $n\ge 10$. By Lemma~\ref{FIN} and by verifying that (\ref{greater}) is also true for $n=8,9$ we get that for every $n\ge 8$
$$n!\ge \sqrt{2\pi n}\left(\frac{n}{e}\right)^n e^{\frac{1}{12n}-\frac{1}{360n^3+102n}}.$$

\ignore{
\section{$n!$ as an Infinite Sum}
In this section we get an exact formula for $a_n$ and show that it is related to Bernoulli numbers.

Let
$$a_n=\sum_{i=1}^\infty \frac{\lambda_i}{n^i}.$$
Then
\begin{eqnarray*}
a_n-a_{n+1}&=&\sum_{i=1}^\infty \frac{\lambda_i}{n^i}-\sum_{i=1}^\infty \frac{\lambda_i}{(n+1)^i}\\
&=& \sum_{i=1}^\infty \frac{\lambda_i}{n^i}-\sum_{i=1}^\infty \frac{\lambda_i}{n^i} \sum_{j=0}^\infty \frac{(-1)^jCC^j_i}{n^j}\\
&=&\sum_{i=1}^\infty \frac{\lambda_i}{n^i} \sum_{j=1}^\infty \frac{(-1)^{j+1}CC^j_i}{n^j}\\
&=&\sum_{k=2}^\infty \left(\sum_{i=1}^{k-1}\lambda_i (-1)^{k-i+1} CC_i^{k-i}\right)\frac{1}{n^k}\\
&=&\sum_{k=2}^\infty \left(\sum_{i=1}^{k-1}\lambda_i (-1)^{k-i+1} {k-1\choose i-1}\right)\frac{1}{n^k}
\end{eqnarray*}
Therefore for $k\ge 2$
$$\sum_{i=1}^{k-1} (-1)^{k-i+1}{k-1\choose i-1}\lambda_i=\frac{(-1)^k(k-1)}{2k(k+1)}.$$
This is equivalent to, for $k\ge 1$
\begin{eqnarray}\label{first}\sum_{i=0}^{k-1}(-1)^{i} {k\choose i}\lambda_{i+1}=\frac{k}{2(k+1)(k+2)}.
\end{eqnarray}
Therefore, $\lambda_1=1/12$, $\lambda_2=0$, $\lambda_3=-1/360$ and
$$\lambda_k=\frac{(-1)^{k+1}}{2(k+1)(k+2)}+\frac{(-1)^k}{k}\sum_{i=0}^{k-2}(-1)^i{k\choose i}\lambda_{i+1}.$$

Consider again (\ref{first}) and let $\delta_i=\lambda_{i-1}i(i-1)$ for $i>2$, CHECK THIS$\delta_1=1/2$ and $\delta_0=1$. By (\ref{first}), we get
$$\sum_{i=0}^{k+1}{k+2\choose i}(-1)^i\delta_{i+2}=0.$$
This is one of the recursive definitions of Bernoulli numbers. Therefore
$$\lambda_i=\frac{B_{i+1}^-}{i(i+1)}$$ where $B_i^-$ is the $i$th Bernoulli number.
Therefore, by Lemma~\ref{main},
$$n!= \sqrt{2\pi n}\left(\frac{n}{e}\right)^n e^{\sum_{i=1}^\infty \frac{B^-_{i+1}}{i(i+1)n^i}}.$$
}

\section{Other Bounds}\label{Other}
The following table gives other precise bounds for $n!$
\begin{center}
\begin{tabular}{|c|}
\hline
$r=5$, $a_n=\frac{1}{12n}-\frac{1}{360n^3}+\frac{1}{1260n^5+944n^3}$\\
\hline
{\footnotesize $-24255n^9-24255n^8+19534030n^7+208372500n^6+846744589n^5+1608743411n^4+$}\\
{\footnotesize $1838090736n^3+1481505592n^2+901562480n+294921648\le 0$ for $n\ge 33$}\\
\hline\\
$n!\le \sqrt{2\pi n}\left(\frac{n}{e}\right)^n e^{\frac{1}{12n}-\frac{1}{360n^3}+\frac{1}{1260n^5+944n^3}}$ for $n\ge 26$\\[-5pt] \\
\hline \hline
$r=6$, $a_n=\frac{1}{12n}-\frac{1}{360n^3}+\frac{1}{1260n^5+945n^3}$\\
\hline
{\footnotesize $3156 n^8-31463 n^6-126937 n^5-241045 n^4-275373 n^3-221928 n^2-$}\\
{\footnotesize $135072 n-44100\ge 0$ for $n\ge 5$}\\
\hline\\
$n!\ge \sqrt{2\pi n}\left(\frac{n}{e}\right)^n e^{\frac{1}{12n}-\frac{1}{360n^3}+\frac{1}{1260n^5+945n^3}}$ for $n\ge 1$\\[-5pt]  \\
\hline \hline
$r=6$, $a_n=\frac{1}{12n}-\frac{1}{360n^3}+\frac{1}{1260n^5}-\frac{1}{1680n^7+2376n^5}$\\
\hline
{\footnotesize $-2730n^{11}-5460n^{10}-28258433n^9+88168297n^8+701534344n^7+2112056100n^6$}\\
{\footnotesize $+4069612325n^5+5596290735n^4+5773252968n^3+4320089004n^2$}\\
{\footnotesize $+2021099850n+425134710\le 0$ for $n\ge 8$}\\
\hline\\
$n!\le \sqrt{2\pi n}\left(\frac{n}{e}\right)^n e^{\frac{1}{12n}-\frac{1}{360n^3}+\frac{1}{1260n^5}\frac{1}{1680n^7+2376n^5}}$ for $n\ge 1$\\[-5pt]  \\
\hline \hline
$r=7$, $a_n=\frac{1}{12n}-\frac{1}{360n^3}+\frac{1}{1260n^5}-\frac{1}{1680n^7+2375n^5}$\\
\hline
{\footnotesize $196560n^{12}+393120n^{11}-650113107n^{10}-650309667n^9-2613399138n^8-15256200960n^7-$}\\
{\footnotesize $45641758349n^6-87900450451n^5-120821404840n^4-124589009460n^3-$}\\
{\footnotesize $93179955210n^2-43560354750n-9152946000\ge 0$ for $n\ge 58$}\\
\hline\\
$n!\ge \sqrt{2\pi n}\left(\frac{n}{e}\right)^n e^{\frac{1}{12n}-\frac{1}{360n^3}+\frac{1}{1260n^5}\frac{1}{1680n^7+2375n^5}}$ for $n\ge 53$\\[-5pt]  \\
\hline \hline
\end{tabular}
\end{center}

In  each one of the above tables, the first row gives $r$ and $a_n$ that is used in Lemma~\ref{FIN}. The second row is the inequality for which (\ref{greater2}) or (\ref{less2}) is equivalent and for which $n$ this inequality is valid. The third row is the result.
\ignore{
\section{TO CHECK IF WE SHOULD ADD IT}
\section{Exact}
From Lemma if
$$\left(n+\frac{1}{2}\right) \ln \left(1+\frac{1}{n}\right)-1 =A_n=
\sum_{k=2}^\infty \frac{(-1)^k(k-1)}{2k(k+1)}\frac{1}{n^k}= a_n-a_{n+1}$$
then
$$n!= \sqrt{2\pi n}\left(\frac{n}{e}\right)^n e^{a_n}.$$
Notice that $a_n=A_n+A_{n+1}+\cdots$ satisfy $a_n-a_{n+1}=A_n$. Therefore
\begin{eqnarray*}
a_n&=&\sum_{i=0}^\infty\sum_{k=2}^\infty \frac{(-1)^k(k-1)}{2k(k+1)}\frac{1}{(n+i)^k}.\\
&=& \sum_{k=2}^\infty \frac{(-1)^k(k-1)}{2k(k+1)}\sum_{i=0}^\infty \frac{1}{(n+i)^k}.\\
\end{eqnarray*}

This can be bounded by some integrals}

\newpage
\section{APPENDIX}
Here we give a sketch of the proof of Wallis' formula and to fact that
$$\lim_{n\to\infty} \frac{n!e^n}{n^{n+\frac{1}{2}}}=\sqrt{2\pi}.$$
Consider $n\ge 0$ and
$$I_n=\int_0^{\pi/2}\sin^nx dx.$$
Obviously, $\sin^nx\le \sin^{n-1}x$ and therefore $I_n<I_{n-1}$ for all $n\ge 1$.
By the integration by parts it is easy to see that $I_0=\pi/2$, $I_1=1$
$$I_n=\frac{n-1}{n} I_{n-2}.$$ This implies
$$I_{2n}=\frac{(2n)!}{2^{2n}(n!)^2}\cdot\frac{\pi}{2},\ \ \ \ \ \ I_{2n+1}=\frac{2^{2n}(n!)^2}{(2n+1)!}.$$
From the inequality $I_{2n-1}>I_{2n}>I_{2n+1}$ we get
$$\sqrt{\pi n}< \frac{2^{2n}(n!)^2}{(2n)!}< \sqrt{\pi (n+1/2)}.$$
By Lemma~\ref{L1} and (\ref{Taylor}), since for $a_n=0$
$$
\sum_{k=2}^\infty \frac{(-1)^k(k-1)}{2k(k+1)}\frac{1}{n^k}\ge 0$$
we have for every $k>n$,
$$\frac{n!e^n}{n^{n+1/2}}\ge \frac{k!e^k}{k^{k+1/2}}.$$ In particular for $k=2n$ this inequality is equivalent to
$$\frac{n!e^n}{n^{n+1/2}}\le \sqrt{1+\frac{1}{2n}}\sqrt{2\pi}.$$
Similarly, since for $a_n=1/n$,
$$
\sum_{k=2}^\infty \frac{(-1)^k(k-1)}{2k(k+1)}\frac{1}{n^k}\le \frac{1}{12n^2}\le \frac{1}{n}-\frac{1}{n+1}$$
$$\frac{n!e^n}{n^{n+1/2}}\cdot e^{-1/n}\ge \frac{k!e^k}{k^{k+1/2}}\cdot e^{-1/k}.$$ In particular for $k=2n$ this inequality is equivalent to
$$\frac{n!e^n}{n^{n+1/2}}\ge e^{\frac{1}{2n}-\frac{1}{n}}\sqrt{2\pi}.$$
Then by the sandwich theorem we get
$$\lim_{n\to\infty} \frac{n!e^n}{n^{n+\frac{1}{2}}}=\sqrt{2\pi}.$$

\end{document}